\documentclass[a4paper,11pt,leqno]{amsart}
\usepackage[applemac]{inputenc}
\usepackage{epsfig,graphics,xypic,url}
\usepackage{tikz}

\usepackage{amsthm,amssymb,amsbsy,a4wide,verbatim}
\usepackage{bbm}
\usepackage{longtable}


\def\Z{\mathbbm{Z}}

\def\N{\mathbbm{N}}

\def\C{\mathbbm{C}}
\def\Q{\mathbbm{Q}}

\def\ii{\mathrm{i}}

\def\redu{\mathrm{red}}

\def\queue{\mathrm{tail}}

\newtheorem{theor}{Theorem}

\newtheorem{prop}[theor]{Proposition}

\newtheorem{definition}[theor]{Definition}
\numberwithin{theor}{section}

\makeatletter
\newcommand{\xleftrightarrow}[2][]{\ext@arrow 3359\leftrightarrowfill@{#1}{#2}}
\newcommand{\xdashrightarrow}[2][]{\ext@arrow 0359\rightarrowfill@@{#1}{#2}}
\newcommand{\xdashleftarrow}[2][]{\ext@arrow 3095\leftarrowfill@@{#1}{#2}}
\newcommand{\xdashleftrightarrow}[2][]{\ext@arrow 3359\leftrightarrowfill@@{#1}{#2}}
\def\rightarrowfill@@{\arrowfill@@\relax\relbar\rightarrow}
\def\leftarrowfill@@{\arrowfill@@\leftarrow\relbar\relax}
\def\leftrightarrowfill@@{\arrowfill@@\leftarrow\relbar\rightarrow}
\def\arrowfill@@#1#2#3#4{%
  $\m@th\thickmuskip0mu\medmuskip\thickmuskip\thinmuskip\thickmuskip
   \relax#4#1
   \xleaders\hbox{$#4#2$}\hfill
   #3$%
}
\makeatother

\title{Proof of the BMR conjecture for $G_{20}$ and $G_{21}$}
\author{Ivan Marin}
\address{LAMFA \\ UMR CNRS 7352 \\ Universit\'e de Picardie-Jules Verne \\ 33 rue Saint-Leu \\ 80039 Amiens Cedex 1 \\ France }
\email{ivan.marin@u-picardie.fr} 

\date{January 31, 2017.}
\begin{document}

\begin{abstract} We prove two new cases of the Brou\'e-Malle-Rouquier freeness conjecture for the Hecke algebras associated
to complex reflection groups. These two cases are the complex
reflection groups of rank 2 called $G_{20}$ and $G_{21}$ in the Shephard and Todd classification. This reduces the number of remaining
unproven cases to 3.
\end{abstract}

\maketitle

\section{Introduction}

Two decades ago, M. Brou\'e, G. Malle and R. Rouquier conjectured in \cite{BMR}
that the generalized Hecke algebras that they attached to an arbitrary complex
reflection group satisfy the crucial structural property of the ordinary (Iwahori-)Hecke algebras
attached to a finite Coxeter group, namely that they are free modules of rank
equal to the order of the group. This is known as the BMR \emph{freeness conjecture},
and it can be easily reduced to the case where the complex reflection
group $W$ is irreducible.  We refer to \cite{CYCLO} for a general exposition
of this conjecture and standard results about it.

The Shephard-Todd classification of irreducible complex reflection groups defines an infinite
family $G(de,e,n)$ of such groups, for which the conjecture was already known to hold
by work of Ariki and Ariki-Koike (see \cite{ARIKI,ARIKIKOIKE}), and a long
list of exceptional groups. Subsequent works have proved it for most
of the exceptional groups, notably all the ones of rank at least 3 (see
\cite{CYCLO,HECKECUBIQUE,MARINPFEIFFER}),
and most of the ones of rank 2 (see \cite{CHAVLITHESE,CHAVLICRAS}). In rank 2,
the 5 remaining ones are named, in Shephard-Todd notation, $G_{17}$, $G_{18}$,
$G_{19}$, $G_{20}$ and $G_{21}$. 
In this work, we prove the cases of $G_{20}$ and $G_{21}$, by a
method of a different nature than in the previous works. This reduces the list of remaining
cases to the 3 groups $G_{17}$, $G_{18}$ and $G_{19}$, for which it appears difficult
to apply readily the methods of this paper.

In section 2 we recall the main definitions, and prove a technical property that
will allow us to work over rings of definitions which are polynomial rings,
instead of the usual Laurent polynomial rings. In section 3 we explain the
general method : how we find a potential basis for the Hecke algebras and how
we find a list of rewriting rules. Then, sections 4 and 5 contain the rewriting
rules we used in the cases of $G_{20}$ and $G_{21}$, respectively.

\bigskip

The GAP4 programs used for $G_{21}$ can be found on my webpage \url{http://www.lamfa.u-picardie.fr/marin/G20G21code-en.html}.

\bigskip

{\bf Acknowledgements.} I thank G. Pfeiffer for improving (optimizing) my original programs.

\section{Definitions and preliminaries}

Let $W$ be a finite complex (pseudo-)reflection group.
We let $B$ denote the braid group of $W$, as defined in \cite{BMR} \S 2 B, and recall that
a (pseudo-)reflection $s$ is called distinguished if its only nontrivial eigenvalue is $\exp(2 \ii \pi/o(s))$, where
$\ii \in \C$ is the chosen square root of $-1$ and $o(s)$ denotes the order of $s \in W$.

We  let $R =  
\Z[ a_{s,i}, a_{s,0}^{-1}] $ where $s$ runs
over the distinguished reflections
in $W$ and $0 \leq i \leq o(s)-1$, where $o(s)$ is the order of $s$ in $W$,
with the convention $a_{s,i} = a_{s',i}$ if $s,s'$ are conjugates in $W$.
For the standard notion of a braided reflection associated to $s$ we refer to \cite{BMR},
where they are described as `generators-of-the-monodromy' around the
divisors of the orbit space. The definition of the Hecke algebra associated to $W$ reads
as follows.

\begin{definition} The generic Hecke algebra is the quotient of the group algebra $R B$ by the relations
$\sigma^{o(s)} - a_{s,o(s)-1}\sigma^{o(s)-1} - \dots - a_{s,0} = 0$ for each braided reflection $\sigma$ associated
to $s$.
\end{definition}

Actually,
it is enough to choose one such relation per conjugacy class of distinguished reflection, as all the
corresponding braided reflections are conjugates in $B$. Although we are not going to use this result in our proof, we mention that it was already known by work of Etingof and Rains
(see \cite{ETINGOFRAINS})
that the Hecke algebras of the groups considered here are modules of finite type.
Our main result can now
be stated as follows.

\begin{theor} \label{theo:main}
When $W$ is a complex reflection group of Shephard-Todd type $G_{20}$
or $G_{21}$, then the generic Hecke algebra of $W$ is a free $R$-module of rank $|W|$.
\end{theor}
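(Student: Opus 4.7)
The plan is to establish upper and lower bounds on the rank of the generic Hecke algebra $H$ as an $R$-module, and show they both equal $|W|$. The upper bound comes from exhibiting an explicit spanning set of cardinality $|W|$ via a rewriting procedure, and the lower bound follows from a specialization argument combined with the Etingof--Rains finite-type result cited in the introduction.

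For the upper bound, I would first choose a candidate basis $\mathcal{B} \subset H$ of cardinality $|W|$ obtained by picking, for each $w \in W$, a distinguished lift $\tilde w \in B$ (for instance via a section of $B \to W$ of the kind coming from a chosen set of minimal-length-type representatives, or one built up inductively from short words in the generators). The free $R$-submodule $M = R\langle \mathcal{B}\rangle \subseteq H$ then needs to be shown to be stable under left multiplication by each braided reflection $\sigma$. Concretely, for each pair $(\sigma, b)$ with $b \in \mathcal{B}$, one needs to express $\sigma b$ as an $R$-linear combination of elements of $\mathcal{B}$. These expressions are the \emph{rewriting rules}; the reduction of an arbitrary product $\sigma b$ to normal form is obtained by applying the Hecke relation $\sigma^{o(s)} = \sum_{i} a_{s,i} \sigma^i$ whenever a generator has been powered up too high, together with braid-type identities of $B$ to move factors around. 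Since $M$ contains $1$ and is stable under the generators, $M = H$, giving $H = R\cdot\mathcal{B}$ and hence a spanning set of size $|W|$.

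For the lower bound, specialize the parameters to values sending the Hecke relations to $\sigma^{o(s)} = 1$, so that $H$ surjects onto the group algebra of $W$, which has rank $|W|$. Combined with the Etingof--Rains theorem guaranteeing that $H$ is a finitely generated $R$-module, this forces the $|W|$ elements of $\mathcal{B}$ to be $R$-linearly independent (no $R$-linear combination can vanish without specializing to a vanishing combination in $RW$). The technical observation of Section~2 letting us work over a polynomial ring rather than a Laurent polynomial ring is convenient here because it allows arguments by reduction modulo the ideal generated by the $a_{s,i}$ with $i>0$.

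The hard part is manufacturing the rewriting rules themselves and proving that the induced reduction process is confluent and terminates on the candidate basis $\mathcal{B}$. The combinatorics of $G_{20}$ (of order $360$) and especially $G_{21}$ (of order $720$) makes the explicit enumeration of rules for $\sigma b$, $b \in \mathcal{B}$, intractable by hand, which is why the verification is delegated to GAP4. The conceptual content of Sections~3--5 is therefore: (a) a principled way to generate a candidate $\mathcal{B}$ incrementally (starting from $1$ and extending by generators while refusing to add anything that is already in the span via the known relations), and (b) a loop that either closes the list of rules at $|W|$ elements or produces a new relation to add. The main obstacle is guaranteeing closure: one must verify that the process stabilizes with exactly $|W|$ basis candidates and that every product $\sigma b$ does reduce, which in these two cases requires the explicit tables of rewriting rules recorded in Sections~4 and~5.
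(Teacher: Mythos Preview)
Your overall architecture (produce a spanning set of size $|W|$, then argue it must be a basis) matches the paper, but both halves diverge from what is actually done, and the lower-bound half has a real gap.

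\textbf{Lower bound.} The argument you sketch does not work as written. If $\sum r_i b_i = 0$ in $H$, specializing to the group algebra only tells you that the images $\bar r_i$ vanish, i.e.\ that each $r_i$ lies in the kernel of $R \to \Z$; it does not force $r_i = 0$. Invoking Etingof--Rains is also a red herring: once you have a spanning set of $|W|$ elements, finite generation is automatic, and the paper explicitly states that the Etingof--Rains result is \emph{not} used. What the paper does instead is encapsulated in Proposition~\ref{prop:R0}: part (ii) reduces from $R$ to the polynomial ring $R_0$, and part (i) (quoting \cite{CYCLO}, Proposition~2.4) gives the implication ``spanned by $|W|$ elements $\Rightarrow$ free of rank $|W|$'' directly over $R_0$. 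So the reduction to $R_0$ is not merely a computational convenience as you suggest; it is the mechanism by which the freeness conclusion is drawn from the spanning statement.

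\textbf{Upper bound.} Your description of how $\mathcal{B}$ is found is not what the paper does. The basis is \emph{not} built incrementally by choosing lifts of elements of $W$; rather, one runs a non-commutative Gr\"obner basis computation (via GBNP) on several random specializations, observes that the set $\mathcal{G}$ of leading monomials is stable across specializations, and takes $\mathcal{B}$ to be the set of positive words avoiding every pattern in $\mathcal{G}$. What is built incrementally is the ordered list $\mathcal{L}$ of rewriting rules, one for each element of $\mathcal{G}$, each rule being established by hand (or semi-automatically for $G_{21}$) as a genuine identity in $H_0$. There is also no need to prove confluence or general termination of the rewriting system: one only checks, by direct computation, that the reduction algorithm terminates on the finitely many inputs $w s$ with $w \in \mathcal{B}$ and $s$ a generator. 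That verification is the right-multiplication table of Section~3, and it alone certifies that $\mathcal{B}$ spans $H_0$.
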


Let $R_0 = \Z[b_{s,i}, 1 \leq i \leq o(s)]$ where $s$ runs over the distinguished reflections,
with the convention $b_{s,i} = b_{s',i}$ if $s,s'$ are conjugates in $W$,
and define $H_0$ as the quotient of $R_0 B$ by the relations 
$$\sigma^{o(s)} - b_{s,o(s)-1}\sigma^{o(s)-1} - \dots 
- b_{s,1}\sigma- 1 = 0$$ 
for each braided reflection $\sigma$ associated
to $s$. Again, it is enough to choose one such relation per conjugacy class of distinguished reflection. We let $H$ denote the usual Hecke algebra, defined over $R$.

The next proposition is useful in order to reduce the number of parameters involved in
the computations.

\begin{prop} \label{prop:R0} 
{\ }
\begin{enumerate}
\item $H_0$ is spanned by $|W|$ elements as a $R_0$-module iff it is a free $R_0$-module of rank $|W|$.
\item $H$ is a free $R$-module of rank $|W|$ iff $H_0$ is a free $R_0$-module of rank $|W|$.
\end{enumerate}
\end{prop}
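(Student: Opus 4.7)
\emph{Plan.} The implication ``free of rank $|W|$ $\Rightarrow$ spanned by $|W|$ elements'' in~(i) is immediate. For the remaining content I would proceed as follows.

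\emph{Proof of~(i), converse.} Let $e_1,\ldots,e_{|W|}$ be a spanning family and $\phi\colon R_0^{|W|}\twoheadrightarrow H_0$ the induced surjection, with $K=\ker\phi$. The goal is $K=0$, which I would establish in two steps. First, the augmentation $\pi\colon R_0\to\Z$, $b_{s,i}\mapsto 0$, turns each defining relation $\sigma^{o(s)}-\sum_{i=1}^{o(s)-1}b_{s,i}\sigma^i-1=0$ into $\sigma^{o(s)}=1$, hence $H_0\otimes_{R_0}\Z\cong\Z W$ is free of $\Z$-rank $|W|$. Tensoring the short exact sequence $0\to K\to R_0^{|W|}\to H_0\to 0$ with $\Z$ yields a surjection $\Z^{|W|}\twoheadrightarrow\Z W$ between free $\Z$-modules of equal rank, hence an isomorphism, which forces $K\subseteq IR_0^{|W|}$ with $I=\ker\pi$. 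Second, I would show that $\dim_F(H_0\otimes_{R_0}F)=|W|$ where $F=\mathrm{Frac}(R_0)$: the upper bound is the spanning hypothesis, while for the lower bound I would appeal to a Tits-deformation-style argument (combined with the Etingof--Rains finite-type theorem) guaranteeing that $H_0\otimes F$ becomes isomorphic to the group algebra $FW$ after a suitable finite extension of $F$. Then $\phi\otimes F$ is a surjection between $F$-vector spaces of equal dimension, hence an isomorphism, so $K\otimes_{R_0}F=0$. Thus $K$ is $R_0$-torsion; being also a submodule of the torsion-free $R_0$-module $R_0^{|W|}$ (as $R_0$ is a domain), it must vanish.

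\emph{Proof of~(ii).} The plan is to identify $H$ and $H_0$ after adjoining roots. Set $R'=R[t_s]/(t_s^{o(s)}-a_{s,0}^{-1})$, with one new generator $t_s$ per conjugacy class of distinguished reflections; then $R'$ is free of finite rank over $R$, in particular faithfully flat. Eliminating $a_{s,0}=t_s^{-o(s)}$ presents $R'$ as a Laurent polynomial ring $R_0[t_s^{\pm1}]$ through the map $R_0\to R'$, $b_{s,i}\mapsto a_{s,i}t_s^{o(s)-i}$ (the formal convention $b_{s,0}=1$ matches $a_{s,0}t_s^{o(s)}=1$), so this map is faithfully flat as well. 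The substitution $\sigma\mapsto t_s\sigma$ inside $H\otimes_R R'$ sends each $H$-relation to the corresponding $H_0$-relation, yielding a canonical $R'$-algebra isomorphism $H\otimes_R R'\cong H_0\otimes_{R_0}R'$. Consequently, if one of the two algebras is free of rank $|W|$, so is the common base change over $R'$; faithfully flat descent combined with Quillen--Suslin for $R_0=\Z[b_{s,i}]$ (and its extension by Swan for the Laurent polynomial ring $R$) then upgrades the other algebra from ``finitely generated projective of constant rank $|W|$'' to ``free of rank $|W|$''. For the direction $H\Rightarrow H_0$ one may alternatively avoid projective-module theory by checking that $H_0$ inherits an $R_0$-spanning set of size $|W|$ from $H$ via this change of variables and then invoking~(i).

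\emph{Main obstacle.} The crux is the generic-dimension assertion in the second step of~(i). Without an a priori lower bound on $\dim_F(H_0\otimes F)$, the specialization argument only confines $K$ to the augmentation ideal $IR_0^{|W|}$ and offers no further leverage inside the non-local polynomial ring $R_0$. Everything else---the specialization at the augmentation, the rescaling $\sigma\mapsto t_s\sigma$, and the descent machinery---is largely routine once the right objects are in place.
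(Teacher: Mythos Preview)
Your treatment of~(ii) is essentially the paper's own argument in different packaging. Your ring $R'=R[t_s]/(t_s^{o(s)}-a_{s,0}^{-1})$ coincides with the paper's $A\otimes_{\Z}R_0$ under $t_s\leftrightarrow x_s^{-1}$, and your rescaling $\sigma\mapsto t_s\sigma$ is exactly what the paper encodes via the Hopf-algebra composite $\Delta$ followed by abelianisation and the character $s\mapsto x_s$. Both arguments then use that this common extension is free of finite rank over $R$ to exhibit $H$ as a direct summand of a finite free $R$-module, hence projective, and finish with the Quillen--Suslin/Swan input (which the paper quotes as \cite{CYCLO}, Proposition~2.5). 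Your faithfully-flat-descent phrasing and the paper's ``direct summand'' phrasing lead to the same projectivity conclusion.

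For~(i) the paper offers no argument of its own and simply defers to \cite{CYCLO}, Proposition~2.4. Your sketch, however, has a genuine gap precisely where you flag it, and your proposed patch does not close it. The Tits deformation theorem in all its standard forms takes as \emph{hypothesis} that the algebra is free (or at least flat) of finite rank over the base and then compares the semisimple structures of the generic and special fibres; invoking it to establish $\dim_F(H_0\otimes F)\ge|W|$ is therefore circular. Etingof--Rains contributes only finite generation, and upper semicontinuity of fibre dimension over $\mathrm{Spec}\,R_0$ yields $\dim_F(H_0\otimes F)\le|W|$, the inequality you already have. What is really needed is an independent supply of generic representations---e.g.\ the monodromy construction of \cite{BMR}, which produces for each irreducible $W$-module a representation of the Hecke algebra over an extension of $F$ and hence forces $\dim_F(H_0\otimes F)\ge\sum_V(\dim V)^2=|W|$. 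Note also that the present paper states explicitly that it does \emph{not} rely on Etingof--Rains, so whatever argument \cite{CYCLO} provides is of a different nature from what you outline.
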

\begin{proof} The proof of (i) is the same as the one of \cite{CYCLO}, proposition 2.4. We prove (ii). We have a ring
morphism $\phi_1 : R \to R_0$ defined by $a_{s,i} \mapsto b_{s,i}$ if $i \geq 1$, $a_{s,0} \mapsto 1$,
for which $H_0 = H \otimes_{\phi_1} R_0$. Therefore, if $H$ is a free $R$-module of rank $|W|$,
we get the $H_0 \simeq R^{|W|} \otimes_{\phi_1} R_0 \simeq R_0^{|W|}$ is also free of rank $|W|$.
We prove the converse. Assume that $H_0$ is $R_0$-free of rank $|W|$. Let $A = \Z[x_s,x_s^{-1}]$ where $s$ runs
among the distinguished reflections of $W$ with $x_s = x_{s'}$ if $s,s'$ are conjugates in $W$. We have an injective ring
morphism $R \to A \otimes_{\Z} R_0$ defined by $a_{s,0} \mapsto x_s^{o(s)} = x_s^{o(s)} \otimes 1$,
and $a_{s,i} \mapsto b_{s,i}x_s^{o(s)-i} = x_s^{o(s)-i}  \otimes b_{s,i}$ for $i \geq 1$. We first note that $A \otimes R_0$
is a free $R$-module of finite rank, since it is easily checked that
$$
A \otimes R_0 = \bigoplus_{s \in \mathcal{S}} \bigoplus_{0 \leq i < o(s)} x_s^i R
$$
where $\mathcal{S}$ is a system of representatives of the conjugacy classes of distinguished reflections.

We denote $\check{H}_0$ the quotient of
the group algebra $(A \otimes_{\Z} R_0)B$ of $B$ over $A \otimes_{\Z} R_0$
by the relations $(x_s \sigma)^{o(s)} 
- b_{s,o(s)-1}x_s (x_s \sigma)^{o(s)-1} - \dots - b_{s,1}x_s^{o(s)-1} (x_s \sigma) - x_s^{o(s)}= 0$ for each braided reflection $\sigma$ associated
to $s$. 
We consider the composite map 
$$
\xymatrix{
A B \ar[r]_{\!\!\!\!\!\!\!\!\! \!\!\!\!\!\!\!\!\! \Delta}& (A B) \otimes_A (A B) \ar[r]_{\mathrm{Id} \otimes Ab} & (A B)  \otimes_A (A B^{ab}) 
\ar[rr]_{\ \ \ \ \ \ \ \mathrm{Id} \otimes (s \mapsto x_s)} & \ \ &(A B) \otimes_A A \ar[r]_{\ \ \ \ \simeq} & A B
}
$$
where $\Delta$ is the usual coproduct of the Hopf algebra $A B$, $Ab : B \to B^{ab}$ the abelianization
morphism and, by abuse of notations, the associated linear map $A B \to A B^{ab}$, and `$s \mapsto x_s$'
denotes the map $B^{ab} \to A$ defined as follows. It is known (see e.g. \cite{BMR}) that $B^{ab}$ is a free $\Z$-module admitting a natural
basis indexed by the conjugacy classes of distinguished reflections. The map is defined by mapping the basis
element associated to (a conjugacy class of) distinguished reflection $s$ to the scalar $x_s \in A$.

The composite map is easily checked to be an $A$-algebra isomorphism. Its natural extension
$(A \otimes R_0) B \to (A \otimes R_0) B$ induces an isomorphism $\check{H}_0  = H \otimes_R (A \otimes_{\Z} R_0)$.

Now, if $H_0$ is $R_0$-free of rank $|W|$, then $\check{H}_0 = H_0 \otimes_{R_0} A$ is $A \otimes R_0$-free of rank $|W|$.
Since $A \otimes R_0$ is a free $R$-module of finite rank, this implies that $\check{H}_0$ is a free $R$-module of finite
rank, and also that, since $\check{H}_0  = H \otimes_R (A \otimes_{\Z} R_0)$, that the $R$-module $H$ is a direct factor
of $\check{H}_0$. Therefore $H$ is projective as a $R$-module and this implies that $H$ is free of rank $|W|$ by \cite{CYCLO}, proposition 2.5.

\end{proof}

The groups we hare interested in are the ones denoted $G_{20}$ and $G_{21}$ in the Shephard-Todd notation.
They admit presentations symbolized by the following diagrams

\begin{center}
\begin{tikzpicture}
\draw (0,0) node (1) {$3$};
\draw (1)++(3,0) node (2) {$3$};
\draw (1) edge (2);
\draw (1) circle (7pt);
\draw (2) circle (7pt);
\draw (1)++(1.5,.3) node {$5$};
\end{tikzpicture}
\ \ \ 
\ \ \ 
\ \ \ 
\ \ \ 
\begin{tikzpicture}
\draw (0,0) node (1) {$2$};
\draw (1)++(3,0) node (2) {$3$};
\draw (1) edge (2);
\draw (1) circle (7pt);
\draw (2) circle (7pt);
\draw (1)++(1.5,.3) node {$10$};
\end{tikzpicture}
\end{center}
that is $G_{20} = \langle s_1, s_2 \ | \ s_1 s_2 s_1 s_2 s_1 = s_2 s_1 s_2 s_1 s_2, \ s_1^3=s_2^3 =1 \rangle$
and $G_{21} = \langle s_1, s_2 \ | \ (s_1 s_2)^5= (s_2 s_1)^5, \ s_1^2=s_2^3 =1 \rangle$. In these
presentations, $s_1,s_2$ are distinguished reflections, and every distinguished reflection is a conjugate of one of them.
Moreover, $s_1$ and $s_2$ are conjugates in $G_{20}$, as is readily deduced from the presentation itself.
The corresponding braid groups admit the same
presentations, with the order relations removed.

We use the above proposition to define the Hecke algebras of $G_{20}$ and $G_{21}$ over $R_0$,
where $R_0 = \Z[a,b]$ for $G_{20}$ and $R_0 = \Z[a,b,q]$ for $G_{21}$, with relations
$$
\begin{array}{|cclcl|cclcl|}
\hline
G_{20} & : & s_1^3 &=& a s_1^2 + b s_1 + 1 & G_{21} &: & s_1^2 &=& q s_1 + 1 \\
 &&s_2^3 &=& a s_2^2 + b s_2 + 1  & & & s_2^3 &=& a s_2^2 + b s_2 + 1 \\
 \hline
 \end{array}
 $$
 In the subsequent section we prove these Hecke algebras are spanned by the `right'
 number of elements, and this proves theorem \ref{theo:main} by proposition \ref{prop:R0}.

\section{General method}

In this section we describe the general method we used to prove the conjecture in these cases. It proceeds in several steps.

\begin{enumerate}
\item Heuristics/Experimentation
\item Incremental determination of computational rules
\item Right multiplication table
\end{enumerate}
\subsection{Heuristics/Experimentation}

The first crucial element is of heuristic nature, provided by a software able to
compute non-commutative Gr\"obner basis for finitely presented associative $\Q$-algebras.
We used the GAP4 package GBNP (see \cite{GBNP}) with the standard (`\verb+deglex+') ordering
for monomials, taking as input the presentations of \cite{BMR},
where we specialized the Hecke algebras at more or less random parameters. For $G_{20}$
and $G_{21}$ it finished in reasonable time for all the specializations we tried,
while for $G_{18}$ and $G_{19}$ it was not able to complete the computation
after several months of running time, except for the simple case of the group algebra
specialization, that is the presentation of $W$ viewed as a presentation of the
Hecke algebra at very special parameters. For all the groups of the so-called icosahedral
series of complex reflection groups of rank $2$, GBNP nevertheless finds a Gr\"obner basis of the rational group algebra of $W$.

It turns out that most if not all the specializations we tried for $G_{20}$ and $G_{21}$
(including the group algebra specialization) provided the same number of elements
for the Gr\"obner basis. As an indication of the complexity of this heuristic data, we provide
the following table, were $\# W$ is the order of $W$ and $\# gb$ is the number
of elements in the Gr\"obner basis. The groups whose name appears in bold fonts are the ones for
which the BMR freeness conjecture is now proved, after work of Chavli for $G_{16}$
(see \cite{CHAVLITHESE,CB3}), 
of Marin-Pfeiffer for $G_{22}$ (see \cite{MARINPFEIFFER}), and by the present work for $G_{20}$ and $G_{21}$.

The output of GBNP we are interested in is the collection $\mathcal{G}$ of leading monomials
of the Gr\"obner basis. In case we had computed the Gr\"obner basis
for several specializations this collection turned out to be independent of the
specialization. From this one computes easily the set $\mathcal{B}$
of all words avoiding the patterns which belong to $\mathcal{G}$. As expected, it has cardinality
$|W|$ and provides for these specializations a basis of the Hecke algebra.

$$
\begin{array}{|c|c|c||c|c|c|}
\hline
W & \# W & \# gb & W & \# W & \# gb  \\
\hline
\hline
\mathbf{G_{16}} & 600 & 44 &  \mathbf{G_{20}} & 360 & 36\\
\hline
G_{17} & 1200 & 49 &  \mathbf{G_{21}} & 720 & 30 \\
\hline
G_{18} & 1800 & 138 &  \mathbf{G_{22}} & 240 & 66 \\
\hline
G_{19} & 3600 & 558 & & & \\
\hline
\end{array}
$$

\subsection{Incremental determination of computational rules}

It so happens that all defining relations are included in the Gr\"obner bases
provided by GBNP.
We view these as the first step in the construction of an \emph{ordered list} $\mathcal{L}$
of rewriting rules of the form $w \leadsto c_w$ where $w \in \mathcal{G}$
and $c_w$ is a $R_0$-linear combination of elements of $\mathcal{B}$, with the
property that the equality $w = c_w$ holds inside the Hecke algebra $H_0$.
More precisely, the defining relations of the braid groups of the form $b_1 = b_2$
are included under the form $b_1 \leadsto b_2$ for $b_1 > b_2$. One checks that $b_1 \in
\mathcal{G}$ and $b_2 \in \mathcal{B}$ in all cases. The order relations,
of the form $\sigma^m = b_{s,m-1}\sigma^{m-1}+\dots+ b_{s,1}\sigma+1$,
are also included under the form $\sigma^m \leadsto b_{s,m-1}\sigma^{m-1}+\dots+ b_{s,1}\sigma+1$. We denote $\mathcal{L}_0$ the ordered list of leading terms $w \in \mathcal{G}$
of the rules in $\mathcal{L}$.

The incremental process aims at enlarging $\mathcal{L}$ so that
$\mathcal{L}$ contains at the end as many elements as $\mathcal{G}$, with the set
of elements inside $\mathcal{L}_0$ being equal to $\mathcal{G}$.

The way we enlarge $\mathcal{L}$ is as follows. We use an algorithm for
computing a given word as a $R_0$-linear combination
of words as follows.
\begin{itemize}
\item Input : a word $w$ in the generators and their inverses
\item If $w$ contains the inverse of a generator, replace $w$ by a
linear combination of \emph{positive} words, by applying the rewriting rules
$\sigma^{-1} \leadsto \sigma^{m-1} -b_{s,m-2}\sigma^{m-1}-\dots- b_{s,1}$
as many times as needed, and apply the present algorithm to these
words.
\item If $w \in \mathcal{B}$, then return $w$.
\item If not, then look for the first element in $\mathcal{L}_0$
which appear as a subword in $w$. If there is none, return \verb+fail+.
If there is one $v$, with $w = avb$, then replace it with the linear combination $ac_vb$, where $v \leadsto c_v$ belongs to $\mathcal{L}$, and apply the algorithm
to each monomial of this linear combination.
\end{itemize}

It is clear that, if the present algorithm terminates for a given word $w$,
producing a $R_0$-linear combination $b_w$, then the equality $w =b_w$
holds inside $H_0$. Adding more elements in $\mathcal{L}$
will not change the result if the input is one for which the algorithm already
terminated, but instead potentially increases the number of words
for which it does provide a result.

Our strategy is then to establish a number of equalities inside $H_0$
of the form $w_i = b_{w_i}$, where $w_i \in \mathcal{G}$ and $b_{w_i}$
is a linear combination of words with possibly negative powers,
such that $\mathcal{L}$ originally contains the first $w_1,\dots,w_{n_0}$
originating from the defining relations, and so that we can build incrementally 
$\mathcal{L}$ as follows.
\begin{itemize}
\item If $\mathcal{L} = (w_1 \leadsto c_{w_1},\dots, w_n \leadsto c_{w_n})$,
then apply the algorithm with $\mathcal{L}$ to $b_{w_{n+1}}$. It produces
a linear combination $c_{w_{n+1}}$. Add to $\mathcal{L}$ the rule
$w_{n+1} \leadsto c_{w_{n+1}}$.
\item Start again with the new $\mathcal{L}$.
\end{itemize}

For the first group ($G_{20}$) we are interested in, we managed to produce
a convenient list 
of rewriting rules $w_i \leadsto b_{w_i}$ completely
by hand (see section \ref{sect:rulesG20}). For the group $G_{21}$ the making
of this list had to be partly automatized, too (see section \ref{sect:rulesG21}).

\subsection{Right multiplication table}

Completing the (right)multiplication table is then
merely a way to check that $H_0$ is indeed spanned by the elements of
$\mathcal{B}$. It is sufficient to calculate, using the algorithm described
in the previous subsection, each word $ws$ where $w \in \mathcal{B}$
and $s$ a generator, as a $R_0$-linear combination of the
words in $\mathcal{B}$.

\section{Rules for $G_{20}$}
\label{sect:rulesG20}
We first provide the list of rewriting rules, and subsequently justify it.
$$
\begin{array}{clcl}
(1) & 111 & \leadsto & a. 11 + b. 1 + \emptyset \\
(2) & 222 & \leadsto & a. 22 + b. 2 + \emptyset \\
(3) & 21212 & \leadsto & 12121 \\
(4) & 2112121 & \leadsto & 1212112 \\
(5) & 2121122& \leadsto & 1122121+(a).212112+(-a).122121+(-b).22121+(b).21211 \\
(6) & 22122121& \leadsto & 12122122+(a).2122121+(-a).1212212+(b).122121+(-b).121221 \\
(7) & 2211212& \leadsto & 1212211+(a).211212+(-a).121221+(-b).12122+(b).11212 \\
(8) & 21211211& \leadsto & 11211212+(a).2121121+(-a).1211212+(b).212112+(-b).211212 \\
(9) & 212112122& \leadsto & 112122121+(a).21211212+(-a).12122121+(-b).2122121+(b).2121121 \\
(10) &221211212& \leadsto & 121221211+(a).21211212+(-a).12122121+(-b).1212212+(b).1211212 \\ 
\end{array}$${}
$$
\begin{array}{clcl}
(11) & 21221122 & \leadsto & a.2121122 + b.211122 + a.21\bar{2}122+ b . 212 + a. 21\bar{2}\bar{1}2 + b. 21\bar{2}\bar{1} + 
\bar{1}\bar{2}\bar{1} 21 \\
(12) & 22112212 & \leadsto & a.2112212+b.112212+a.\bar{2}12212+b.212+a.\bar{2}\bar{1}212+b.\emptyset+12\bar{1}\bar{2}\bar{1}\\
(13) & 2112122121 & \leadsto & a.2212122121\bar{2}+b.221212121\bar{2}+22121121\bar{2} \\
(14) & 212212211 & \leadsto & a.21221221+b.2122122+ a.212212\bar{1}+b.2122+ 212\bar{1}\bar{2}\bar{1}212\\
(15) & 211221122 & \leadsto & a.21122112+b.2112211+a.211221\bar{2}+b.2112+a.2112\bar{1}\bar{2}+b.21\bar{2} \\
& & & +a.21\bar{2}\bar{1}\bar{2}+ b.\bar{1}\bar{2} + \bar{1} \bar{2}\bar{1} \bar{2}1\\
(16) & 221122112 & \leadsto & (a).21122112+(b).1122112+(a).\bar{2}122112+(b).2112+(a).\bar{2}\bar{1}2112 \\
& & & +(b).\bar{2}12+(a).\bar{2}\bar{1}\bar{2}12+(b).\bar{2}\bar{1}+1\bar{2}\bar{1}\bar{2}\bar{1}\\
(17) & 2112112211 & \leadsto & (a).211212211+(b).21122211+(a).2112\bar{1}211+(b).21121 \\
& & & +(a).2112\bar{1}\bar{2}1+(b).2112\bar{1}\bar{2}+21\bar{2}\bar{1}\bar{2}12 \\ 
(18) & 221121121 & \leadsto & (a).21121121+(b).1121121+(a).\bar{2}121121+(b).1121+121\bar{2}\bar{1}\bar{2}121 \\
(19) & 21221121122 & \leadsto & (a).2122112122+(b).212211222+(a).2122112\bar{1}2\\
& & & +(b).2122112\bar{1}+212\bar{1}\bar{2}\bar{1}21121 \\
(20) & 21122121121 & \leadsto & (a).2122121121+(b).222121121+(a).2\bar{1}2121121\\
& & & +(b).221121+2212212\bar{1}\bar{2} \\
\end{array}
$$
{}
$$
\begin{array}{clcl}
(21) & 22112112212 & \leadsto & (a).2211212212+(b).221122212+(a).22112\bar{1}212\\
& & & +(b).221122+22112212\bar{1}\bar{2}\bar{1} \\
(22) & 211211211212 & \leadsto & 2112212122212\bar{1} \\
(23) & 211211212212 & \leadsto & 2112121212\bar{1}212 \\
(24) & 211212211211 & \leadsto & 21\bar{2}12121211211 \\
(25) & 2112112212211 & \leadsto & (a).211211221211+(b).21121122111+(a).211211221\bar{2}1\\
& & & +(b).211211221\bar{2}+21\bar{2}\bar{1}2112212 \\
(26) & 2112122122122 & \leadsto & 2121212\bar{1}2122122 \\
(27) & 2112211211221 & \leadsto & (a).212211211221+(b).22211211221+(a).2\bar{1}211211221\\
& & & +(b).21211221+(a).2\bar{1}\bar{2}1211221+(b).21221+2211221\bar{2}\bar{1}\bar{2} \\
(28) & 2112212212212 & \leadsto & (a).212212212212+(b).22212212212+(a).2\bar{1}212212212\\&&&+(b).22212212+2212\bar{1}\bar{2}\bar{1}212212 \\
(29) & 2122122122122 & \leadsto & \bar{1}21212\bar{1}2122122122 \\
(30) & 2212212212212 & \leadsto & 2212212212\bar{1}21212\bar{1} \\
\end{array}
$$
{}
$$
\begin{array}{clcl}
(31) & 21121121121121 & \leadsto & 21121121121\bar{2}12121\bar{2} \\
(32) & 21121121121122 & \leadsto & 
a.2112112112122+b.211211211222\\
& & & +a.2112112112\bar{1}2+b.2112112112\bar{1}+21121121\bar{2}\bar{1}\bar{2}
121 \\
(33) & 21121121122122 & \leadsto & 21\bar{2}12121\bar{2}121122122 \\
(34) & 21122122112112 & \leadsto & 2112\bar{1}21212\bar{1}2112112 \\
(35) & 211211221221221 & \leadsto & 21\bar{2}12121\bar{2}1221221221 \\
(36) & 2112112112212112 & \leadsto & 211211211212121\bar{2}12 \\
\end{array}
$$

We now justify each one of the above rules.
Rule (3) is a direct consequence of the braid relation,
and (4) follows from 
$21\underline{12121}= \underline{21212}12
= 1212112$.

We have
$$
\begin{array}{lcl}
2121122 &=& \bar{1} \underline{12121}122 \\
&=& \bar{1} 2\underline{12121}22 \\
&=& \bar{1} 22121\underline{222} \\
&=& a.\bar{1} 2212122+b.\bar{1} 221212 + \bar{1} 22121 \\
&=& a.\bar{1} 2\underline{21212}2+b.\bar{1} 2\underline{21212} + \bar{1} (111 - a. 11 - b.1)22121 \\
&=& a.\bar{1} \underline{21212}12+b.\bar{1} \underline{21212}1 +  (11 - a. 1 - b.\emptyset)22121 \\
&=& a.\bar{1} 1212112+b.\bar{1} 121211 +  1122121 - a. 122121 - b.22121 \\
&=& a.212112+b.21211 +  1122121 - a. 122121 - b.22121 \\
\end{array}
$$
whence (5).

We have 
$
22122121 = 
2212\underline{21212}\bar{2} = 
2\underline{21212}121\bar{2} = 
\underline{21212}1121\bar{2} = 
1212\underline{111}21\bar{2}$ hence
$$
\begin{array}{lcl}
22122121 &=&1212(a.11 + b.1 + \emptyset)21\bar{2} \\
&=&a.\underline{12121}121\bar{2}+b.12\underline{12121}\bar{2}+121221\bar{2} \\
&=&a.212\underline{12121}\bar{2}+b.1221212\bar{2}+121221\bar{2} \\
&=&a.21221212\bar{2}+b.122121+121221(22 - a.2 - b\emptyset) \\
&=&a.2122121+b.122121+12122122 -a. 1212212-b. 121221\\
\end{array} 
$$
whence (6).

We have $2211212 = 221\underline{12121} \bar{1}
= 22\underline{12121}2 \bar{1}
= \underline{222}12122 \bar{1}$
hence
$$
\begin{array}{lcl}
2211212 &=&(a.22 + b.2 + \emptyset)12122 \bar{1} \\
&=&a.2\underline{21212}2 \bar{1} + b.\underline{21212}2 \bar{1} +12122 \bar{1} \\
&=&a.21\underline{21212} \bar{1} + b.1\underline{21212} \bar{1} +1212211 - a.121221 - b.12122 \\
&=&a.2112121 \bar{1} + b.112121 \bar{1} +1212211 - a.121221 - b.12122 \\
&=&a.211212 + b.11212 +1212211 - a.121221 - b.12122 \\
\end{array}
$$
whence (7).

We have $21211211 = \bar{1} \underline{12121}1211 = \bar{1} 21\underline{21212}11 = \bar{1} 211212\underline{111}$
hence
$$
\begin{array}{lcl}
21211211 &=&\bar{1} 211212(a.11 + b.1 + \emptyset) \\
&=&a \bar{1} 21\underline{12121}1+b\bar{1} 21\underline{12121}+(11-a.1 -b. \emptyset) 211212  \\
&=&a \bar{1} \underline{21212}121+b\bar{1} \underline{21212}12+11211212-a.1211212 -b.211212   \\
&=&a \bar{1} 12121121+b\bar{1} 1212112+11211212-a.1211212 -b.211212   \\
&=&a 2121121+b212112+11211212-a.1211212 -b.211212   \\
\end{array}
$$
and this proves (8). Similarly, we have $212112122 = \bar{1}\underline{12121}12122 = \bar{1} 212\underline{12121}22
= \bar{1} 2122121\underline{222}$ hence
$$
\begin{array}{lcl}
212112122 &=& \bar{1} 2122121(a. 22 + b.2 + \emptyset) \\
 &=& a.\bar{1} 212\underline{21212}2+b.\bar{1} 212\underline{21212}+ \bar{1} 2122121 \\
 &=& a.\bar{1} \underline{21212}1212+b.\bar{1} \underline{21212}121+ (11 - a.1 - b) 2122121 \\
 &=& a.\bar{1} 121211212+b.\bar{1} 12121121+ 112122121 - a.12122121 - b. 2122121 \\
 &=& a.21211212+b.2121121+ 112122121 - a.12122121 - b .2122121 \\
\end{array}
$$
and this proves (9).
Finally we have $221211212 = 22121\underline{12121} \bar{1}
= 22\underline{12121}212 \bar{1}
= \underline{222}1212212 \bar{1}$ hence
$$
\begin{array}{lcl}
221211212 &=& (a.22 + b.2 + \emptyset)1212212 \bar{1} \\
 &=& a.2\underline{21212}212 \bar{1} + b.\underline{21212}212 \bar{1} + 1212212 \bar{1} \\
 &=& a.2121\underline{21212} \bar{1} + b.121\underline{21212} \bar{1} + 1212212 (11 - a.1 - b) \\
 &=& a.212112121 \bar{1} + b.12112121 \bar{1} +  121221211 - a.12122121 - b.1212212 \\
 &=& a.21211212 + b.1211212 +  121221211 - a.12122121 - b.1212212 \\
\end{array}
$$
and this proves (10).

We have $21\underline{22}1122 = a.2121122+b.211122+21\bar{2}1122$
and $21\bar{2}\underline{11}22 = a.21\bar{2}122 + b . 21\bar{2}22  + 21\bar{2}\bar{1}22
=  a.21\bar{2}122 + b . 212  + 21\bar{2}\bar{1}22$.
Then, $21\bar{2}\bar{1}\underline{22} = a. 21\bar{2}\bar{1}2 + b. 21\bar{2}\bar{1}+ 21\bar{2}\bar{1}\bar{2}$.
Since $21\bar{2}\bar{1}\bar{2} = \bar{1}\bar{2}\bar{1} 21$ this proves (11).

We have $\underline{22}112212 = a. 2112212+b.112212+\bar{2}112212$,
then $\bar{2}\underline{11}2212 = a. \bar{2}12212 + b. \bar{2}2212 + \bar{2}\bar{1}2212$
and $\bar{2}\bar{1}2212 = a.\bar{2}\bar{1}212+ b \bar{2}\bar{1}12 + \bar{2}\bar{1}\bar{2}12$.
Finally, $\bar{2}\bar{1}\bar{2}12 = 12 \bar{1}\bar{2}\bar{1}$ and this proves (12).

We have $2112122121 = 211212\underline{21212}\bar{2}
= 21\underline{12121}2121\bar{2}= 2\underline{12121}22121\bar{2}
= 22121\underline{222}121\bar{2}
= a .2212122121\bar{2} + b. 221212121\bar{2} +  22121121\bar{2}$
and this proves (13).

We have $2122122\underline{11} = a.21221221+b.2122122+ 2122122\bar{1}$,
and $21221\underline{22}\bar{1} = a.212212\bar{1}+b.21221\bar{1}+ 21221\bar{2}\bar{1}$.
Now, $21221\bar{2}\bar{1} = 212\underline{21\bar{2}\bar{1}\bar{2}} 2 =  212\bar{1}\bar{2}\bar{1}21 2$
and this proves (14).

We expand  $2(11)(22)(11)(22)$ by using four times the relation $x^2 = a.x + b + x^{-1}$
for $x \in \{1 ,2 \}$ at the four places between parenthesis we get 
$211221122 = a.21122112+b.2112211+a.211221\bar{2}+b.2112+a.2112\bar{1}\bar{2}+b.21\bar{2}
+a.21\bar{2}\bar{1}\bar{2}+ b.\bar{1}\bar{2} + 2\bar{1} \bar{2}\bar{1} \bar{2}$. Now $\bar{1} \bar{2}\bar{1} \bar{2}1 = 2\bar{1} \bar{2}\bar{1} \bar{2}$ and this proves (15).

Rule $\# 16$ is similar to rule \# 15 : we expand $(22)(11)(22)(11)2$ and use $1\bar{2}\bar{1}\bar{2}\bar{1} = \bar{2}\bar{1}\bar{2}\bar{1}2$.
Rule $\# 17$ is
similar to rules \# 15 and \# 16 : expand $2112(11)(22)(11)$ and use $2112\bar{1}\bar{2}\bar{1} = 211\underline{2\bar{1}\bar{2}\bar{1}\bar{2}}2
= 211\bar{1}\bar{2}\bar{1}\bar{2}12 =  21\bar{2}\bar{1}\bar{2}12$.

By expanding $(22)(11)21121$
we get $221121121 = (a).21121121+(b).1121121+(a).\bar{2}121121+(b).1121+\bar{2}\bar{1} 21121$.
Since $\bar{2}\bar{1} 21121 = 1 \underline{\bar{1}\bar{2}\bar{1} 21}121
= 1 21\bar{2}\bar{1} \bar{2}121$ this proves (18).

By expanding $2122112(11)(22)$ we get
$21221121122 = (a).2122112122+(b).212211222+(a).2122112\bar{1}2+(b).2122112\bar{1}+2122112\bar{1}\bar{2}$
and
$2122112\bar{1}\bar{2} = 21221\underline{12\bar{1}\bar{2}\bar{1}}1
= 212\underline{21\bar{2}\bar{1}\bar{2}}121
= 212\bar{1}\bar{2}\bar{1}21121$ which proves (19).

By expanding $2(11)(22)121121$ we get
$21122121121 =  (a).2122121121+(b).222121121+(a).2\bar{1}2121121+(b).221121+2 \bar{1}\bar{2}121121$
and $2 \underline{\bar{1}\bar{2}121}121 = 2 212\underline{\bar{1}\bar{2}121}
= 2 212212\bar{1}\bar{2}$ which proves (20).

By expanding $22112(11)(22)12$
we get $22112112212 = (a).2211212212+(b).221122212+(a).22112\bar{1}212+(b).221122+22112\bar{1}\bar{2}12$
and $22112\bar{1}\bar{2}12=22112\underline{\bar{1}\bar{2}121}\bar{1}
=22112212\bar{1}\bar{2}\bar{1}$ proves (21).

We have $211211211212 = 21121121\underline{12121} \bar{1}
= 21121\underline{12121}212 \bar{1}
= 2112\underline{12121}2212 \bar{1}
= 2112212122212 \bar{1}$ and this proves (22).
We have $211211212212 = 21121\underline{12121}\bar{1}212
= 2112121212\bar{1}212$ and this proves (23).
We have $211212211211 = 21\bar{2}\underline{21212}211211= 21\bar{2}12121211211$ and this proves (24).
We expand $211211221(22)(11)$
and get $2112112212211 = (a).211211221211+(b).21121122111+(a).211211221\bar{2}1 +(b).211211221\bar{2}+211211221\bar{2}\bar{1}$
and
$211211221\bar{2}\bar{1} = 2112112\underline{21\bar{2}\bar{1}\bar{2}}2
= 21121\underline{12\bar{1}\bar{2}\bar{1}}212
= 211\underline{21\bar{2}\bar{1}\bar{2}}12212
= 211\bar{1}\bar{2}\bar{1}2112212
= 21\bar{2}\bar{1}2112212$ and this proves (25).
We have  $2112122122122 = 21\underline{12121}\bar{1}2122122
= 2121212\bar{1}2122122$ and this proves (26).

By expanding $2(11)(22)(11)211221$
we get $2112211211221 = (a).212211211221+(b).22211211221+(a).2\bar{1}211211221
 +(b).21211221+(a).2\bar{1}\bar{2}1211221+(b).21221+ 2 \bar{1}\bar{2}\bar{1}211221$
 and $2 \underline{\bar{1}\bar{2}\bar{1}21}1221 = 
 2 21\underline{\bar{2}\bar{1}\bar{2}12}21 = 
 2 2112\underline{\bar{1}\bar{2}\bar{1}21} = 
 2 211221\bar{2}\bar{1}\bar{2}$  and this proves (27).
 By expanding $2(11)(22)12212212$
we get $2112212212212 = (a).212212212212+(b).22212212212+(a).2\bar{1}212212212+(b).22212212+
2 \bar{1}\bar{2}12212212$, and $2 \bar{1}\bar{2}12212212 
= 2 \underline{\bar{1}\bar{2}121}\bar{1}212212 
= 2 212\bar{1}\bar{2}\bar{1}212212$ and this proves (28).
We have $2122122122122 = \bar{1}\underline{12121}\bar{1}2122122122
= \bar{1}21212\bar{1}2122122122$ and this proves (29). We have $2212212212212 = 2212212212\bar{1}\underline{12121}\bar{1}
= 2212212212\bar{1}21212\bar{1}$ and this proves (30).
We have $21121121121121=21121121121\bar{2}\underline{21212}\bar{2}
=21121121121\bar{2}12121\bar{2}$ and this proves (31).
By expanding $2112112112(11)(22)$ we get  $21121121121122  = 
a.2112112112122+b.211211211222+a.2112112112\bar{1}2+b.2112112112\bar{1}+2112112112\bar{1}\bar{2}$,
and $2112112112\bar{1}\bar{2} = 21121121\underline{12\bar{1}\bar{2}\bar{1}}1
= 21121121\bar{2}\bar{1}\bar{2}121$,
which proves (32).

We have $21121121122122 = 21\bar{2}\underline{21212}\bar{2}121122122
= 21\bar{2}12121\bar{2}121122122$ which proves (33).
We have $21122122112112 = 2112\bar{1}12121\bar{1}2112112
= 2112\bar{1}21212\bar{1}2112112$ which proves (34).
We have $211211221221221 = 
21\bar{2}21212\bar{2}1221221221
=21\bar{2}12121\bar{2}1221221221$ which proves (35).
We have $2112112112212112=2112112112\underline{21212}\bar{2}12
=211211211212121\bar{2}12$  which proves (36).

\section{Rules for $G_{21}$}

\label{sect:rulesG21}

\subsection{Semi-manual procedures}

Let $Y$ be the alphabet $\{ 1,2, \bar{1},\bar{2} \}$, $M(Y)$ the free monoid over $Y$,
and $F(Y) \subset M(Y)$ the subset of \emph{freely reduced words}, that is the set of natural
representatives of the free group on $\{1, 2 \}$ viewed as a quotient of $M(Y)$. We denote $M^+(Y) = M(\{1, 2 \}) \subset M(Y)$
the submonoid of \emph{positive words}. We let $\mathrm{red} : M(Y) \to F(Y)$ denote
the usual reduction procedure, and $\mathrm{red} : R M(Y) \to R F(Y)$ its natural linear extension, where we let $R M(Y)$ the monoid algebra over $R$ and $R F(Y)$ the (free) submodule spanned
by $F(Y)$. We define $\mathrm{pos} : R M(Y) \to R M(Y)$ and call \emph{positivation} the (unique) algebra morphism
mapping $1 \mapsto 1$, $2 \mapsto 2$, $\bar{2} \mapsto 22 - a.2 - b.\emptyset$,
$\bar{1} \mapsto 1 - q.\emptyset$.

A more complicated procedure is what we call \emph{expansion}. By convention we let $\overline{\bar{y}}=y$ for all $y \in \{ 1,2 \}$. For $I \subset \N^*$, let us define the \emph{$I$-inversion} map $\mathrm{inv}_I : M(Y) \to M(Y)$
as follows. If $y = y_1y_2y_3\dots y_n \in M(Y)$ is a word in $n$ letters, with $y_k \in Y$,
$\mathrm{inv}_I(y) = y' =y'_1y'_2y'_3\dots y'_n \in M(Y)$ is defined by $y'_k = \bar{y}_k$ if $k \in I$, $y'_k = y_k$ if $k \not\in I$.
We now define the partially defined \emph{expansion} map 
$\mathrm{exp}_I : M(Y) \dashrightarrow M(Y)$
 with respect to $I$ by induction on the cardinality of $I$.
If $I = \emptyset$, then $\exp_{\emptyset}$ is the identity map. If not, let $i_0 = \min (I)$, and let $J$ such that $I = J \sqcup \{ i_0 \}$.
If $y = y_1y_2y_3\dots y_n \in M(Y)$ is a word in $n$ letters, with $y_k \in Y$, then 
$\mathrm{exp}_I(y)$ is defined if $\mathrm{exp}_J(y)$ is defined, $n \geq i_0$ and if
\begin{itemize}
\item either $y_{i_0} = 1$, in which case $\exp_I(y) = q. y' + z$
with $y' = y'_1y'_2\dots y'_{n-1}$ where $y'_k = y_k$ for $k < i_0$, $y'_k = y_{k+1}$ for $k \geq i_0$, and
$z = \mathrm{exp}_J(\mathrm{inv}_{\{i_0 \}}(y))$
\item either $y_{i_0} = y_{i_0}+1=2$, in which case $\exp_I(y) = a.y' + b. y'' + z$
with 
\begin{itemize}
\item $y'' = y''_1y''_2\dots y''_{n-2}$ where $y''_k = y_k$ for $k < i_0$, $y''_k = y_{k+2}$ for $k \geq i_0$
\item $y' = y'_1y'_2\dots y'_{n-1}$ where $y'_k = y_k$ for $k < i_0$, $y'_{i_0} = y_{i_0}=2$, $y'_k = y_{k+1}$ for $k \geq i_0+1$.
\item $z = \mathrm{exp}_J(\mathrm{inv}_{\{i_0 \}}(y''))$.
\end{itemize}
\end{itemize}
It is easily checked that, when defined, $\mathrm{exp}_I(y) = \mathrm{tail}_I(y)+ \mathrm{head}_I(y)$
with $\mathrm{head}_I(y) \in M(Y)$
being characterized, with the above notations, by $\mathrm{head}_{\{ i_0\} \sqcup J }(y) =   \mathrm{head}_J(z)$,
and $\mathrm{head}_{\emptyset}(y) = y$.

\subsection{Rules}

We can now give the set of rules for $G_{21}$, the justification that they
correspond to genuine relations inside its Hecke algebra basically relying on the above sections.

$$
\begin{array}{clcl}
(1) & 11 & \leadsto & (q).1+\emptyset \\
(2) & 222 & \leadsto & (a).22+(b).2+\emptyset \\
(3) & 2121212121 & \leadsto & 1212121212 \\
(4) & 21212121221 & \leadsto & \redu(\queue_{11}( \bar{1}1*w) + \bar{1}\bar{2}1212121211) \\
(5) & 221221212121 & \leadsto &  \redu(\queue_{1,3,4}(w) + 1212121\bar{2}\bar{1}\bar{2}) \\
(6) & 212121221221 & \leadsto & \redu(\queue_{7,9,10,12}(212121221221)+\overline{1212}121212) \\
(7) & 2121221221221  & \leadsto & \redu(\queue_{5,7,8,10,11,13}(2121221221221)+\overline{121212}1212) \\
(8) & 2212212212121 & \leadsto & \redu(\queue_{1,3,4,6,7}(2212212212121)+12121\bar{2}\bar{1}\bar{2}\bar{1}\bar{2}) \\
(9) & 212122121221221 & \leadsto & \redu(\queue_{10,12,13,15}(w*\bar{2}\bar{1}12)+\bar{1}\bar{2}\bar{1}\bar{2}\bar{1}2121121212) \\
(10) & 22121221212121 & \leadsto&  \redu(\queue_{5,6,8}(w*2\bar{2})+22122121212\bar{1}\bar{2}\bar{1}\bar{2}) \\
\end{array}
$$
{}
$$
\begin{array}{clcl}
(11) & 21212122121221 & \leadsto& \redu(\queue_{11,13}(\bar{1}\bar{2}21*w)+\queue_{12,14,15}(w')+w'') \\
& & & w' = \bar{1}\bar{2}\bar{1}\bar{2}1212121221221 \\
& & & w'' = \bar{1}\bar{2}\bar{1}\bar{2}\bar{2}\bar{1}\bar{2}12121211 \\
(12) & 21221221221221 & \leadsto & \redu(\queue_{3,5,6,8,9,11,12,14}(w)+\bar{1}\bar{2}\bar{1}\bar{2}\bar{1}\bar{2}\bar{1}\bar{2}12) \\
(13) & 22122122122121 & \leadsto & \redu(\queue_{1,3,4,6,7,9,10,12}(w)+12\bar{1}\bar{2}\bar{1}\bar{2}\bar{1}\bar{2}\bar{1}\bar{2}) \\
(14) & 2121212212121221 & \leadsto & \redu(\queue_{14,16}(w*\bar{2}\bar{1}12)+2121212\bar{1}\bar{2}\bar{1}\bar{2}12121212) \\
(15) & 221221212212121 & \leadsto & \redu(\queue_{8,9}(w*212\bar{2}\bar{1}\bar{2})+221221221212121\bar{2}\bar{1}\bar{2}\bar{1}\bar{2}) \\
(16) & 2212121221212121 & \leadsto & \redu(\queue_{7,8}(w*2\bar{2})+22121221212121\bar{2}\bar{1}\bar{2}) \\
(17) & 2212212212122121 & \leadsto & \redu(\queue_{3,5,6,8,9}(1\bar{1}*w)+12121\bar{2}\bar{1}\bar{2}\bar{1}\bar{2}\bar{1}2121) \\
(18) & 22122121212212121 & \leadsto & \redu(\queue_{3,5,6}(1\bar{1}*w)+1212121\bar{2}\bar{1}\bar{2}\bar{1}212121) \\
(19) & 212122122121221221 & \leadsto & \redu(\queue_{12,13,15,16,18}(w*\bar{2}\bar{1}12)+\bar{1}\bar{2}\bar{1}\bar{2}\bar{1}21221221212) \\
(20) &221221212212212121  & \leadsto & \redu(\queue_{8,9,11,12}(w*2\bar{2})+2212212212121\bar{2}\bar{1}\bar{2}\bar{1}\bar{2}) \\
\end{array}
$$
{}
$$
\begin{array}{clcl}
(21) & 2212122121212212121 & \leadsto & \redu(\queue_{5,6}(\hat{w})+221221212121\bar{2}\bar{1}\bar{2}\bar{1}212121) \\
 & & & \hat{w} = 221212212121212\bar{2}\bar{1}212121 \\
(22) & 2212212212122122121 & \leadsto & \redu(\queue_{3,5,6,8,9}(1\bar{1}*w)+12121\bar{2}\bar{1}\bar{2}\bar{1}\bar{2}\bar{1}2122121) \\
(23) & 22121221221212122121 & \leadsto & \redu(\queue_{5,6,8,9}(w)+\queue_{5,7,8}(w'))+w'' \\
& & & w' = 12\bar{2}\bar{1}2212212121\bar{2}\bar{1}\bar{2}\bar{1}2121 \\
& & & w'' =1212121\bar{2}\bar{1}\bar{2}\bar{1}\bar{2}\bar{2}\bar{1}\bar{2}\bar{1}2121 \\
(24) & 22122121221212212121 & \leadsto & \redu(\queue_{13,14}(w*212\bar{2}\bar{1}\bar{2})+22122121221221212121\bar{2}\bar{1}\bar{2}\bar{1}\bar{2}) \\
(25) & 221221212212212122121 & \leadsto & \redu(\queue_{7,9,10,12}(121\bar{1}\bar{2}\bar{1}*w)+121212212212\bar{1}\bar{2}\bar{1}\bar{2}\bar{1}2121) \\
(26) & 2121221212212122121221 & \leadsto & \redu(\queue_{20,22}(w*\bar{2}\bar{1}\bar{2}\bar{1}1212)+21212\bar{1}\bar{2}\bar{1}\bar{2}\bar{1}21212212112121212) \\
(27) & 22121221221212212122121 & \leadsto & \redu(\queue_{5,6,8,9}(\hat{w})+2212212121121221212\bar{1}\bar{2}\bar{1}\bar{2}\bar{1}\bar{2}) \\
& & & \hat{w} = 2212122122121212\bar{2}\bar{1}212122121 \\
(28) & 22122121221212212122121  & \leadsto & \redu(\queue_{7,9,10}(121\bar{1}\bar{2}\bar{1}*w)+12121211212\bar{1}\bar{2}\bar{1}\bar{2}\bar{1}212122121) \\
(29) & 2212122121221212212121 & \leadsto & \redu(\queue_{15,16}(w*212\bar{2}\bar{1}\bar{2})+\queue_{10,11,13,14}(w')+w'') \\
& & & w' = 2212122121221221212121\bar{2}\bar{1}\bar{2}\bar{1}\bar{2}\\
& & & w'' = 221212212212121\bar{2}\bar{1}\bar{2}\bar{2}\bar{1}\bar{2}\bar{1}\bar{2} \\
(30) & 2212122121221212122121 & \leadsto & \redu(\queue_{14,15}(\hat{w})+\queue_{5,6,8,9}(w')+\queue_{5,7,8}(w'')+w''') \\
&&& \hat{w} = 22121221221\bar{1}\bar{2}1221212122121 \\
&&& w' = 22121221221212121\bar{2}\bar{1}\bar{2}\bar{1}2121 \\
&&& w'' = 12\bar{2}\bar{1}2212212121\bar{2}\bar{1}\bar{2}\bar{2}\bar{1}\bar{2}\bar{1}2121 \\
&&& w''' = 1212121\bar{2}\bar{1}\bar{2}\bar{1}\bar{2}\bar{2}\bar{1}\bar{2}\bar{2}\bar{1}\bar{2}\bar{1}2121
\end{array}
$$

\begin{table}
$$
\begin{array}{|cl||cl||cl|} 
\hline
\mathrm{Num.} & \mathrm{Word} & \mathrm{Num.} & \mathrm{Word} & \mathrm{Num.} & \mathrm{Word} \\ 
 \hline 
 \hline 
 1 & 111  & 13 & 2112122121  & 25 & 2112112212211  \\ 
\hline 
2 & 222 &  14 & 212212211  & 26 & 2112122122122  \\ 
\hline 
3 & 21212  & 15 & 211221122  & 27 & 2112211211221  \\ 
\hline 
4 & 2112121  & 16 & 221122112  & 28 & 2112212212212  \\ 
\hline 
5 & 2121122  & 17 & 2112112211  & 29 & 2122122122122  \\ 
\hline 
6 & 22122121  & 18 & 221121121  & 30 & 2212212212212  \\ 
\hline 
7 & 2211212  & 19 & 21221121122  & 31 & 21121121121121  \\ 
\hline 
8 & 21211211  & 20 & 21122121121  & 32 & 21121121121122  \\ 
\hline 
9 & 212112122  & 21 & 22112112212  & 33 & 21121121122122  \\ 
\hline 
10 & 221211212  & 22 & 211211211212  & 34 & 21122122112112  \\ 
\hline 
11 & 21221122  & 23 & 211211212212  & 35 & 211211221221221  \\ 
\hline 
12 & 22112212  & 24 & 211212211211  & 36 & 2112112112212112  \\ 
\hline 
\end{array} 
$$
\caption{Dominant terms of the Gr\"obner basis for $G_{20}$}
\end{table}

\begin{table}
\resizebox{16cm}{!}{
$
\begin{array}{|r|r|r|r|r|r|r|r|r|}
\hline
\emptyset  & 112122 & 2112212 & 21221121 & 211212212 & 2112112122 & 12211211221 & 121122122122 & 2112112112212\\
1 & 112211 & 2121121 & 21221211 & 211221121 & 2112112212 & 12212211211 & 121221221221 & 2112112122112\\
2 & 112212 & 2122112 & 21221221 & 211221211 & 2112122112 & 12212212212 & 122122112112 & 2112112212112\\
11 & 121121 & 2122121 & 22112112 & 211221221 & 2112122122 & 21121121121 & 122122122122 & 2112112212212\\
12 & 121122 & 2122122 & 22112211 & 212211211 & 2112211211 & 21121121122 & 211211211211 & 2112212211211\\
21 & 121211 & 2211211 & 22121121 & 212212112 & 2112212112 & 21121121221 & 211211211221 & 11211211211211\\
22 & 121221 & 2211221 & 22122112 & 212212212 & 2112212211 & 21121122121 & 211211212211 & 11211211211221\\
112 & 122112 & 2212112 & 22122122 & 221121122 & 2112212212 & 21121122122 & 211211221211 & 11211211212211\\
121 & 122121 & 2212211 & 112112112 & 221221121 & 2122112112 & 21121221121 & 211211221221 & 11211211221211\\
122 & 122122 & 2212212 & 112112122 & 221221221 & 2122121121 & 21121221221 & 211212212212 & 11211211221221\\
211 & 211211 & 11211211 & 112112211 & 1121121121 & 2122122122 & 21122112112 & 211221121122 & 11211212212212\\
212 & 211212 & 11211212 & 112112212 & 1121121122 & 2211211221 & 21122122112 & 211221221121 & 11211221121122\\
221 & 211221 & 11211221 & 112121121 & 1121121221 & 2212211211 & 21122122122 & 211221221221 & 11211221221121\\
1121 & 212112 & 11212112 & 112122112 & 1121122112 & 2212212212 & 21221221221 & 212212212212 & 11211221221221\\
1122 & 212211 & 11212211 & 112122121 & 1121122121 & 11211211211 & 22122112112 & 221221221221 & 11212212212212\\
1211 & 212212 & 11212212 & 112122122 & 1121122122 & 11211211212 & 22122122122 & 1121121121121 & 11221221221221\\
1212 & 221121 & 11221121 & 112211211 & 1121211212 & 11211211221 & 112112112112 & 1121121121122 & 12112112112112\\
1221 & 221122 & 11221122 & 112211221 & 1121221121 & 11211212211 & 112112112122 & 1121121121221 & 12112112112212\\
2112 & 221211 & 11221211 & 112212112 & 1121221211 & 11211212212 & 112112112212 & 1121121122121 & 12112112122112\\
2121 & 221221 & 11221221 & 112212211 & 1121221221 & 11211221121 & 112112122112 & 1121121122122 & 12112112212112\\
2122 & 1121121 & 12112112 & 112212212 & 1122112112 & 11211221211 & 112112122122 & 1121121221121 & 12112112212212\\
2211 & 1121122 & 12112122 & 121121121 & 1122112211 & 11211221221 & 112112211211 & 1121121221221 & 12112212211211\\
2212 & 1121211 & 12112211 & 121121122 & 1122121121 & 11212211211 & 112112212112 & 1121122112112 & 21121121122121\\
11211 & 1121221 & 12112212 & 121121221 & 1122122112 & 11212212112 & 112112212211 & 1121122122112 & 21121121221121\\
11212 & 1122112 & 12121121 & 121122112 & 1122122122 & 11212212212 & 112112212212 & 1121122122122 & 21121122122122\\
11221 & 1122121 & 12122112 & 121122121 & 1211211211 & 11221121122 & 112122112112 & 1121221221221 & 112112112112112\\
12112 & 1122122 & 12122121 & 121122122 & 1211211212 & 11221221121 & 112122121121 & 1122122112112 & 112112112112212\\
12121 & 1211211 & 12122122 & 121211212 & 1211211221 & 11221221221 & 112122122122 & 1122122122122 & 112112112122112\\
12122 & 1211212 & 12211211 & 121221121 & 1211212211 & 12112112112 & 112211211221 & 1211211211211 & 112112112212112\\
12211 & 1211221 & 12211221 & 121221211 & 1211212212 & 12112112122 & 112212211211 & 1211211211221 & 112112112212212\\
12212 & 1212112 & 12212112 & 121221221 & 1211221121 & 12112112212 & 112212212212 & 1211211212211 & 112112212211211\\
21121 & 1212211 & 12212211 & 122112112 & 1211221211 & 12112122112 & 121121121121 & 1211211221211 & 121121121122121\\
21122 & 1212212 & 12212212 & 122112211 & 1211221221 & 12112122122 & 121121121122 & 1211211221221 & 121121121221121\\
21211 & 1221121 & 21121121 & 122121121 & 1212211211 & 12112211211 & 121121121221 & 1211212212212 & 121121122122122\\
21221 & 1221122 & 21121122 & 122122112 & 1212212112 & 12112212112 & 121121122121 & 1211221121122 & 211211211221211\\
22112 & 1221211 & 21121221 & 122122122 & 1212212212 & 12112212211 & 121121122122 & 1211221221121 & 1121121121122121\\
22121 & 1221221 & 21122112 & 211211211 & 1221121122 & 12112212212 & 121121221121 & 1211221221221 & 1121121121221121\\
22122 & 2112112 & 21122121 & 211211212 & 1221221121 & 12122112112 & 121121221221 & 1212212212212 & 1121121122122122\\
112112 & 2112122 & 21122122 & 211211221 & 1221221221 & 12122121121 & 121122112112 & 1221221221221 & 1211211211221211\\
112121 & 2112211 & 21211212 & 211212211 & 2112112112 & 12122122122 & 121122122112 & 2112112112112 & 11211211211221211\\
\hline
\end{array}
$
}
\caption{Basis for $G_{20}$}
\end{table}

\begin{table}
$$
\begin{array}{|c|l||c|l|} 
\hline
\mathrm{Num.} & \mathrm{Word}  & \mathrm{Num.} & \mathrm{Word}  \\ 
\hline
1 & 11 & 16 & 2212121221212121 \\ 
\hline 
2 & 222 & 17 & 2212212212122121 \\ 
\hline 
3 & 2121212121 & 18 & 22122121212212121 \\ 
\hline 
4 & 21212121221 & 19 & 212122122121221221 \\ 
\hline 
5 & 221221212121 & 20 & 221221212212212121 \\ 
\hline 
6 & 212121221221 & 21 & 2212122121212212121 \\ 
\hline 
7 & 2121221221221 & 22 & 2212212212122122121 \\ 
\hline 
8 & 2212212212121 & 23 & 22121221221212122121 \\ 
\hline 
9 & 212122121221221 & 24 & 22122121221212212121 \\ 
\hline 
10 & 22121221212121 & 25 & 221221212212212122121 \\ 
\hline 
11 & 21212122121221 & 26 & 2121221212212122121221 \\ 
\hline 
12 & 21221221221221 & 27 &  22121221221212212122121 \\ 
\hline 
13 & 22122122122121 & 28 &  22122121221212212122121 \\ 
\hline 
14 & 2121212212121221 & 29 & 2212122121221212212121  \\ 
\hline 
15 &  221221212212121 & 30 & 2212122121221212122121 \\ 
\hline 
\end{array} 
$$
\caption{Dominant terms of the Gr\"obner basis for $G_{21}$}
\end{table}

\end{document}